
\documentclass{birkjour}
%
%
%
 \newtheorem{thm}{Theorem}[section]
 
 \newtheorem{lem}[thm]{Lemma}
 
 \theoremstyle{definition}
 
 \theoremstyle{remark}
 \newtheorem{rem}[thm]{Remark}
 \newtheorem*{ex}{Example}
 \numberwithin{equation}{section}

\begin{document}

%
%
%
%
%
%
%
%
%

\title[The generalized $\partial$-complex on the Segal Bargmann space]{The generalized $\partial$-complex on the Segal -Bargmann space}

\author{Friedrich Haslinger}

 \address{  Fakult\"at  f\"ur Mathematik\\ Universit\"at Wien\\
Oskar-Morgenstern-Platz 1\\ A-1090 Wien, Austria}

\email{ friedrich.haslinger@univie.ac.at}

\thanks{Partially supported by the Austrian Science Fund (FWF) project  P28154.}

\subjclass{Primary 30H20, 32A36, 32W50; Secondary 47B38} 

\keywords{$\partial$-complex, Segal-Bargmann space}
\date{October 28, 2019}
\begin{abstract}We study certain densely defined unbounded operators on the Segal-Barg\-mann space, related to the  annihilation and creation operators of quantum mechanics.  We consider the corresponding $D$-complex and study properties of the corresponding complex Laplacian  $\tilde \Box_D = D D^* + D^* D,$ where $D$ is a differential operator of polynomial type. 	
\end{abstract}

\maketitle

\section{Introduction}

We consider the classical Segal-Bargmann space 
$$A^2(\mathbb C^n, e^{-|z|^2} ) = \{ u: \mathbb C^n \longrightarrow \mathbb C \ {\text{entire}} : \int_{\mathbb C^n} |u(z)|^2 e^{-|z|^2} \, d\lambda (z) <\infty \}$$
with inner product 
$$(u,v) =  \int_{\mathbb C^n} u(z)\, \overline{v(z)} \, e^{-|z|^2} \, d\lambda (z)$$
and 
replace a single derivative with respect to $z_j$ by a differential operator of the form $p_j(\frac{\partial}{\partial z_1}, \dots , \frac{\partial}{\partial z_n}),$ where $p_j$ is a complex polynomial on $\mathbb C^n$ (see \cite{NS1}, \cite{NS2}). We write $p_j(u)$ for 
$p_j(\frac{\partial}{\partial z_1}, \dots , \frac{\partial}{\partial z_n})u,$ where $u\in A^2(\mathbb C^n, e^{-|z|^2} ),$ and 
consider the densely defined operators
\begin{equation}\label{gendef1}
Du = \sum_{j=1}^n p_j (u)\, dz_j,
\end{equation}
where $u\in A^2(\mathbb C^n, e^{-|z|^2})$ and $p_j(\frac{\partial}{\partial z_1}, \dots , \frac{\partial}{\partial z_n})$
are polynomial differential operators with constant coefficients.
\cite{Has10}

More generally we define
\begin{equation}\label{gendef1'}
Du =  \sum_{|J|=p}\, ' \,   \sum_{k=1}^n p_k (u_J)\, dz_k \wedge dz_J,
\end{equation}
where $u=  \sum_{|J|=p}\, ' \, u_J\, dz_J$ is a $(p,0)$-form with coefficients in $A^2(\mathbb C^n, e^{-|z|^2}),$ here $J=(j_1, \dots , j_p)$ is a multiindex and $dz_J =dz_{j_1} \wedge \dots \wedge dz_{j_p}$ and the summation is taken only over increasing multiindices.

It is clear that $D^2=0$ and that we have
\begin{equation}\label{gendef2}
(Du, v) = (u, D^*v),
\end{equation}
where $u\in {\text{dom}}(D)= \{ u \in A^2_{(p,0)}(\mathbb{C}^n, e^{-|z|^2}): Du\in A^2_{(p+1,0)}(\mathbb{C}^n, e^{-|z|^2})\}$ and 
$$D^*v= \sum_{|K|=p-1}\, ' \, \sum_{j=1}^n p_j^*v_{jK}\, dz_K$$
for $v= \sum_{|J|=p}\, ' \, v_J\, dz_J$ and where $p_j^*(z_1, \dots ,z_n)$ is the polynomial $p_j$ with complex conjugate coefficients, taken as multiplication operator.

Now the corresponding $D$-complex has the form
 \begin{equation*}
  A^2_{(p-1,0)}(\mathbb{C}^n, e^{-|z|^2}) 
\underset{\underset{D^* }
\longleftarrow}{\overset{D }
{\longrightarrow}} A^2_{(p,0)}(\mathbb{C}^n, e^{-|z|^2}) \underset{\underset{D^* }
\longleftarrow}{\overset{D }
{\longrightarrow}} A^2_{(p+1,0)}(\mathbb{C}^n, e^{-|z|^2}).
\end{equation*}
Similar to the classical $\overline \partial$-complex (see \cite{Has10})  we consider the generalized box operator
$\tilde \Box_{D,p}:= D^*D+DD^*$
as a densely defined self-adjoint operator on $A_{(p,0)}^2(\mathbb C^n, e^{-|z|^2})$ with
$${\text{dom}} (\tilde\Box_{D,p})
 =\{ f\in {\text{dom}}(D) \cap {\text{dom}}(D^* ):D f \in {\text{dom}}(D^*) \ {\text{and}} \ D^* f \in {\text{dom}}(D )\},$$ see \cite{Has20}  for more details.
 
The $(p,0)$-forms with polynomial components are dense in $A_{(p,0)}^2(\mathbb C^n, e^{-|z|^2}).$ In addition we have

\begin{lem}\label{dense} The $(p,0)$-forms with polynomial components are also dense in
$  {\text{dom}}(D )\cap  {\text{dom}}(D^*)$ endowed with the graph norm 
$$u\mapsto (\|u\|^2+\|D u\|^2 + \|D^*u \|^2)^{1/2}.$$ 
\end{lem}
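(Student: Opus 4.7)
The natural plan is to expand $u \in \mathrm{dom}(D) \cap \mathrm{dom}(D^*)$ in the orthonormal monomial basis of the Segal--Bargmann space and to truncate. Let $e_\alpha(z) = z^\alpha/\sqrt{\pi^n\, \alpha!}$ denote the standard basis of $A^2(\mathbb{C}^n,e^{-|z|^2})$ and expand
$$u = \sum_{|J|=p}\, ' \, \sum_{\alpha\in\mathbb{N}^n} c_{J,\alpha}\, e_\alpha\, dz_J,\qquad \|u\|^2 = \sum_{J,\alpha} |c_{J,\alpha}|^2.$$
Set $u_N := \sum_{|J|=p}\, ' \, \sum_{|\alpha|\le N} c_{J,\alpha}\, e_\alpha\, dz_J$, a polynomial $(p,0)$-form. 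The convergence $u_N \to u$ in $L^2$ is immediate from Parseval, and it suffices to prove $D u_N \to D u$ and $D^* u_N \to D^* u$ in $L^2$.

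Using the identities $\partial^\gamma e_\alpha = \sqrt{\alpha!/(\alpha-\gamma)!}\, e_{\alpha-\gamma}$ and $z^\gamma e_\alpha = \sqrt{(\alpha+\gamma)!/\alpha!}\, e_{\alpha+\gamma}$, and writing $d := \max_k \deg p_k$, one sees that $D$ does not raise degree (so $D u_N$ is a polynomial $(p+1,0)$-form of degree $\le N$), while $D^*$ raises degree by at most $d$ (so $D^* u_N$ has degree $\le N+d$). A direct Fourier-coefficient computation shows $(Du - Du_N)_{L,\beta} = 0$ for $|\beta|\le N-d$, so
$$\|Du - Du_N\|^2 = \sum_{|\beta|>N} |(Du)_{L,\beta}|^2 + \sum_{N-d<|\beta|\le N} |(Du - Du_N)_{L,\beta}|^2,$$
the first sum being a tail of $\|Du\|^2$, which vanishes as $N\to\infty$. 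An analogous decomposition, with boundary range $|\beta|\in(N,N+d]$, holds for $D^*u - D^* u_N$, the tail being controlled by $\|D^*u\|^2$.

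The principal obstacle is controlling the boundary contributions. Applying the Cauchy--Schwarz inequality to the (finite) inner sum over $\mathrm{supp}(p_k)$ reduces them to expressions of the form $N^{|\gamma|}\sum_{|\alpha|\in(N,N+d]}|c_{J,\alpha}|^2$ with $|\gamma|\le d$, which tend to $0$ as $N\to\infty$ provided one has the Sobolev-type bound $\sum_{J,\alpha}|c_{J,\alpha}|^2(1+|\alpha|)^d < \infty$. I would derive this extra regularity from the joint hypothesis $u\in\mathrm{dom}(D)\cap\mathrm{dom}(D^*)$ by analysing the quadratic form $(\tilde\Box_{D,p} u, u) = \|Du\|^2 + \|D^*u\|^2$: on the monomial basis the diagonal of $\tilde\Box_{D,p} = DD^* + D^*D$ grows like $|\alpha|^d$ (the leading order of $p_k(\partial)\, p_k^*(z) + p_k^*(z)\, p_k(\partial)$ on $e_\alpha$), while the off-diagonal entries are of lower order $|\alpha|^{d-1/2}$; a Schur-test bound then yields a G{\aa}rding-type inequality
$$\|Du\|^2 + \|D^* u\|^2 \geq c \sum_{J,\alpha} |c_{J,\alpha}|^2 (1+|\alpha|)^d - C\|u\|^2,$$
supplying the missing regularity and closing the argument for both $Du_N\to Du$ and $D^* u_N\to D^* u$.
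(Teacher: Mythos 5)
Your diagnosis of where the work lies is reasonable, but the step you use to close the argument fails: the G{\aa}rding-type inequality $\|Du\|^2+\|D^*u\|^2\ge c\sum_{J,\alpha}|c_{J,\alpha}|^2(1+|\alpha|)^d-C\|u\|^2$ is false for general polynomials $p_1,\dots,p_n$, and so is the premise behind your Schur test (diagonal of $\tilde\Box_{D,p}$ of size $|\alpha|^d$ with lower-order off-diagonal entries). Take $n=2$ and $p_1=p_2=\partial/\partial z_1$ (so $d=1$), and $u^{(m)}=z_2^m\,dz_1-z_2^m\,dz_2$: then $Du^{(m)}=0$ and $D^*u^{(m)}=z_1(z_2^m-z_2^m)=0$, while $\sum_{J,\alpha}|c_{J,\alpha}|^2(1+|\alpha|)^d=(1+m)\|u^{(m)}\|^2$, so no constants $c,C$ can exist. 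The lemma assumes no nondegeneracy of the $p_k$: the form $\|Du\|^2+\|D^*u\|^2$ can vanish on forms of arbitrarily high degree, because the $p_k$ need not involve all variables and because $D^*u=\sum_j p_j^*u_{jK}$ and the components of $Du$ are signed combinations in which cancellations between different components $u_J$ occur. Consequently membership in ${\text{dom}}(D)\cap{\text{dom}}(D^*)$ does not yield the componentwise weighted bound $\sum_{J,\alpha}|c_{J,\alpha}|^2(1+|\alpha|)^d<\infty$, and your estimate of the boundary contributions by $N^{|\gamma|}\sum_{N<|\alpha|\le N+d}|c_{J,\alpha}|^2$ cannot be completed. (Your coercivity estimate does hold in nondegenerate situations, e.g.\ $n=1$ with $a_m\neq 0$, or $p_k=\partial/\partial z_k$, but not in the generality in which the lemma is stated.)

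Note that in the counterexample the quantity you need is nevertheless harmless: there each $p_k$ is a single monomial of a fixed order $r$, so $D$ and $D^*$ commute with truncation by total degree up to a shift ($Du_N$ is exactly a partial sum of $Du$, and similarly for $D^*$), and no boundary band appears at all. This indicates what a repair must look like: the boundary terms have to be controlled through the specific combinations $p_k(u_J)$ and $\sum_j p_j^*u_{jK}$, i.e.\ through $\|Du\|$ and $\|D^*u\|$ themselves, rather than through a componentwise Sobolev-type bound on the coefficients $c_{J,\alpha}$. For comparison, the paper's own proof runs along the same truncation-by-partial-sums scheme as yours, but it simply asserts that the partial sums of the Fourier series of the components of $Du$ and $D^*u$ converge to those components, without isolating or estimating the band where $Du_N$ differs from the corresponding truncation of $Du$; you have correctly located the delicate point, but the coercivity estimate you propose to resolve it is not available at this level of generality.
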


\begin{proof}Let $u=  \sum_{|J|=p}\, ' \, u_J\, dz_J\in   {\text{dom}}(D )\cap  {\text{dom}}(D^*)$ and consider the partial sums of the Fourier series expansions of
$$ u_J = \sum_\alpha u_{J,\alpha} \varphi_\alpha,$$
where 
\begin{equation}\label{vons}
\varphi_\alpha (z) = \frac{z^\alpha}{\sqrt{\pi^n \alpha !}}\ \ {\text{and}} \ \ \sum_\alpha |u_{J,\alpha} |^2 < \infty
\end{equation}
and $\alpha! = \alpha_1! \dots \alpha_n!.$ We have that $Du\in A_{(p+1,0)}^2(\mathbb C^n, e^{-|z|^2})$ and $D^*u \in A_{(p-1,0)}^2(\mathbb C^n, e^{-|z|^2}).$ Hence the partial sums of the Fourier series of the components of $Du$  converge to the components of $Du$ in $A_{(p+1,0)}^2(\mathbb C^n, e^{-|z|^2})$ and the partial sums of the Fourier series  of the components of $D^*u$ converge to the components of $D^*u$ in $A_{(p-1,0)}^2(\mathbb C^n, e^{-|z|^2}).$

\end{proof}

\section{The basic estimate}

We want to find conditions under which $\tilde \Box_{D,1}$ has a bounded inverse. For this purpose we have to consider the graph norm $(\|u\|^2 +\|Du\|^2+ \|D^*u\|^2)^{1/2}$ on ${\text{dom}}(D) \cap {\text{dom}}(D^* ).$ We refer to a theorem from \cite{Has20}, Theorem 5.1, here in a slightly different  improved form.

\begin{thm}\label{basic5}
Suppose that 
there exists a constant $C>0$ such that 
 \begin{equation}\label{comm1}
 \|u\|^2 \le C \sum_{j,k=1}^n ( [ p_k, p^*_j ] u_j, u_k),
 \end{equation}
for any $(1,0)$-form $u= \sum_{j=1}^n u_j dz_j $ with polynomial components.
Then 
\begin{equation}\label{basic6}
\|u\|^2 \le C ( \|Du\|^2+ \|D^*u\|^2),
\end{equation}
for any $u\in {\text{dom}}(D) \cap {\text{dom}}(D^* ).$
\end{thm}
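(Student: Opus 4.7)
The plan is to establish the estimate first for $(1,0)$-forms with polynomial coefficients, where everything is in the Schwartz-type polynomial class so that all integrations by parts and adjoint computations are legitimate, and then pass to general $u \in \mathrm{dom}(D) \cap \mathrm{dom}(D^*)$ using Lemma \ref{dense}. The core of the argument is a commutator identity that turns $\|Du\|^2+\|D^*u\|^2$ into precisely the right-hand side of the hypothesis, up to a manifestly nonnegative term.

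The first step is to compute $\|D^*u\|^2$ and $\|Du\|^2$ separately for $u=\sum_j u_j\,dz_j$ polynomial. By definition $D^*u=\sum_j p_j^* u_j$, and because $p_j$ and $p_j^*$ are formal adjoints on polynomials in $A^2(\mathbb{C}^n,e^{-|z|^2})$, one obtains
\[
\|D^*u\|^2=\sum_{j,k}(p_k p_j^* u_j,u_k).
\]
For $Du=\sum_{j<k}(p_k u_j-p_j u_k)\,dz_k\wedge dz_j$, expanding the squared norm and symmetrising the cross terms yields
\[
\|Du\|^2=\sum_{j\neq k}(p_k^*p_k u_j,u_j)-\sum_{j\neq k}(p_j^*p_k u_j,u_k).
\]
The key step is then to add the two expressions, split the double sum in $\|D^*u\|^2$ into its diagonal and off-diagonal parts, and recognise that the off-diagonal terms combine into commutators $[p_k,p_j^*]$. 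After reintroducing the $j=k$ commutators by adding and subtracting $\sum_j(p_j^*p_j u_j,u_j)$, one arrives at the identity
\[
\|Du\|^2+\|D^*u\|^2=\sum_{j,k=1}^n\bigl([p_k,p_j^*]u_j,u_k\bigr)+\sum_{j,k=1}^n\|p_k u_j\|^2.
\]
Since the second sum is nonnegative, the hypothesis \eqref{comm1} immediately gives $\|u\|^2 \le C(\|Du\|^2+\|D^*u\|^2)$ for $u$ with polynomial components.

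To finish, given $u\in\mathrm{dom}(D)\cap\mathrm{dom}(D^*)$, Lemma \ref{dense} supplies a sequence $u^{(\nu)}$ of polynomial $(1,0)$-forms with $u^{(\nu)}\to u$, $Du^{(\nu)}\to Du$, and $D^*u^{(\nu)}\to D^*u$ in the respective $L^2$-norms. Applying the already-established inequality to each $u^{(\nu)}$ and letting $\nu\to\infty$ yields \eqref{basic6}. The main obstacle is really just the first step: keeping track of the signs coming from the antisymmetry of $dz_k\wedge dz_j$ and correctly identifying which off-diagonal terms do and do not yield commutators; once that bookkeeping is done, the density argument is routine since the inequality passes through norms that are all controlled by the graph norm of Lemma \ref{dense}.
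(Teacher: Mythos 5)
Your proposal is correct and follows essentially the same route as the paper: it expands $\|Du\|^2+\|D^*u\|^2$ for polynomial forms, uses the adjoint relation to rewrite the cross terms, and arrives at the identity $\|Du\|^2+\|D^*u\|^2=\sum_{j,k}([p_k,p_j^*]u_j,u_k)+\sum_{j,k}\|p_k u_j\|^2$, which is exactly the paper's computation. The concluding density argument via Lemma \ref{dense} is also the same as in the paper.
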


 \begin{proof}
First we have 
 $$Du= \sum_{j<k} (p_j(u_k)-p_k(u_j))\, dz_j \wedge dz_k \ \ {\text{and}} \ \ 
 D^*u = \sum_{j=1}^n p_j^* u_j,$$
 hence
 $$ \|Du\|^2 + \|D^*u\|^2 = \int_{\mathbb C^n} \sum_{j<k} |p_k(u_j)-p_j(u_k)|^2 \, e^{-|z|^2} \,d\lambda$$
 $$+ \int_{\mathbb C^n} \sum_{j,k=1}^n p_j^* u_j \, \overline{p_k^*u_k}\, e^{-|z|^2}\,d\lambda$$
 $$=  \sum_{j,k=1}^n \int_{\mathbb C^n} |p_k(u_j)|^2\, e^{-|z|^2}\,d\lambda + 
  \sum_{j,k=1}^n \int_{\mathbb C^n} (p_j^* u_j \, \overline{p_k^*u_k} - p_k(u_j)\overline{p_j(u_k)})\, e^{-|z|^2}\, d\lambda$$
 $$=  \sum_{j,k=1}^n \int_{\mathbb C^n} |p_k(u_j)|^2\, e^{-|z|^2}\,d\lambda +  
  \sum_{j,k=1}^n \int_{\mathbb C^n} [p_k, p_j^*] u_j \overline{u_k}\, e^{-|z|^2}\,d\lambda,$$
where we used \eqref{gendef2}. Note that the expression 
$$\sum_{j,k=1}^n \int_{\mathbb C^n} |p_k(u_j)|^2\, e^{-|z|^2}\,d\lambda$$
is finite, since the components $u_j$ are polynomials, and it follows that the expression
$\sum_{j,k=1}^n ( [ p_k, p^*_j ] u_j, u_k)$ is a real number.

Now the assumption \eqref{comm1} implies that \eqref{basic6} holds for $(1,0)$-forms with polynomial components and, by Lemma \ref{dense}, we obtain \eqref{basic6} for any $u\in {\text{dom}}(D) \cap {\text{dom}}(D^* ).$
 \end{proof}
\begin{rem}In Theorem \ref{basic5} we implicitly suppose that  the expression 
$$\sum_{j,k=1}^n ( [ p_k, p^*_j ] u_j, u_k)$$
 is nonnegative.
\end{rem}

First we consider the one-dimensional case.
Let $p_m$ denote the polynomial differential operator
$$p_m = a_0 +a_1 \frac{\partial}{\partial z} + \dots + a_m \frac{\partial^m}{\partial z^m},$$
with constant coefficients $a_0,a_1, \dots, a_m \in \mathbb C,$ and let $p_m^*$ denote the polynomial
$$p_m^*(z) = \overline a_0 + \overline a_1 z + \dots + \overline a_m z^m,$$
with the complex conjugate coefficients $\overline a_0,\overline a_1, \dots, \overline a_m \in \mathbb C.$

 We consider the densely defined operator
\begin{equation}\label{gendef1}
Du = p_m(u)\, dz,
\end{equation}
where $u\in A^2(\mathbb C, e^{-|z|^2})$ and $p_m(u)\,dz$ is considered as a $(1,0)$-form.  

It is clear that $D^2=0,$ as all $(2,0)$-forms are zero if $n=1,$ and that we have
\begin{equation}\label{gendef20}
(Du, v) = (u, D^*v),
\end{equation}
where $u\in {\text{dom}}(D)= \{ u \in A^2(\mathbb{C}, e^{-|z|^2}): Du\in A^2_{(1,0)}(\mathbb{C}, e^{-|z|^2})\}$ and 
$$D^*v\, dz=  p^*_m v.$$

In the sequel we consider the generalized box operator
$$\tilde \Box_{D,1}:= DD^*$$
as a densely defined self-adjoint operator on $A_{(1,0)}^2(\mathbb C, e^{-|z|^2})$ with
 ${\text{dom}} (\tilde\Box_{D,1}) =\{ f\in  {\text{dom}}(D^* ) :   D^* f \in {\text{dom}}(D )\}.$

\begin{lem}\label{binom}
Let $u$ be an arbitrary polynomial. Then 

$\int_{\mathbb C} [p_m,p_m^*]u(z) \, \overline{u(z)}\, e^{-|z|^2}\, d\lambda (z) $
\begin{equation}\label{comm11}
=\sum_{\ell =1}^m \ell\, ! \int_{\mathbb C} \left | \sum_{k=\ell }^m \binom {k}{\ell } a_k u^{(k- \ell )}(z) \right |^2 \, e^{-|z|^2}\,
d\lambda (z). 
\end{equation}
\end{lem}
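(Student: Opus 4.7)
The plan is to expand $[p_m,p_m^*]u$ by Leibniz's rule, organize the result according to how many derivatives ``fall on'' the polynomial $p_m^*$, and then recognize each resulting integral against $\overline u$ as a squared Segal--Bargmann norm by invoking the adjoint relation \eqref{gendef2} in the one-variable setting.

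First I would write $p_m = \sum_{k=0}^m a_k \partial_z^k$ and compute, for each $k$,
$$\partial_z^k\bigl(p_m^*(z)\,u\bigr) = \sum_{\ell=0}^k \binom{k}{\ell}(p_m^*)^{(\ell)}(z)\, u^{(k-\ell)}.$$
The $\ell=0$ contribution reassembles to $p_m^*(z)\, p_m u$ and cancels the second half of the commutator, so after reindexing with $\ell$ outermost one obtains
$$[p_m,p_m^*]\,u \;=\; \sum_{\ell=1}^m (p_m^*)^{(\ell)}(z)\,(q_\ell u), \qquad q_\ell u := \sum_{k=\ell}^m \binom{k}{\ell} a_k\, u^{(k-\ell)}.$$

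Second, I would identify the multiplier $(p_m^*)^{(\ell)}(z)$ as the Segal--Bargmann adjoint of the differential operator $q_\ell$, up to a scalar $\ell!$. A direct differentiation of $p_m^*(z)=\sum_{j=0}^m \overline{a_j} z^j$ gives
$$(p_m^*)^{(\ell)}(z) \;=\; \ell! \sum_{k=\ell}^m \binom{k}{\ell}\overline{a_k}\,z^{k-\ell},$$
so $(p_m^*)^{(\ell)}(z)/\ell!$ is obtained from the symbol of $q_\ell$ by replacing $\partial_z$ with $z$ and conjugating coefficients. The one-variable instance of \eqref{gendef2} says precisely that multiplication by this polynomial is the Hilbert-space adjoint of $q_\ell$ on $A^2(\mathbb C,e^{-|z|^2})$.

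Combining these two steps, for each $\ell\ge 1$ one gets
$$\int_{\mathbb{C}} (p_m^*)^{(\ell)}(z)\,(q_\ell u)\,\overline{u(z)}\,e^{-|z|^2}\,d\lambda(z) \;=\; \ell!\,(q_\ell u,\,q_\ell u) \;=\; \ell!\,\|q_\ell u\|^2,$$
and summing over $\ell=1,\dots,m$ is exactly the right-hand side of \eqref{comm11}. The computation is essentially mechanical bookkeeping; the one conceptual ingredient worth isolating is the observation that $(p_m^*)^{(\ell)}$ is $\ell!$ times the Bargmann-adjoint multiplier of $q_\ell$, which is what converts the a priori sign-indefinite commutator pairing into a manifestly nonnegative sum of squared norms, and which in turn is the reason the hypothesis \eqref{comm1} can ever be verified in concrete examples.
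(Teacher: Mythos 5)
Your argument is correct, and it is a genuinely cleaner organization of the same underlying computation. The paper also starts from the Leibniz rule, but it expands $[p_m,p_m^*]u$ into a double sum over $j,k$ (applying Leibniz to $\partial^k(z^j u)$), integrates against $\overline u$ using the adjointness of $z$ and $\partial_z$ term by term, and then checks that the resulting double sum matches the expansion of the square on the right-hand side of \eqref{comm11}; the final matching is left as ``easy to show.'' You instead group the Leibniz expansion by the number $\ell$ of derivatives falling on $p_m^*$, write $[p_m,p_m^*]u=\sum_{\ell\ge 1}(p_m^*)^{(\ell)}\,q_\ell u$ with $q_\ell u=\sum_{k\ge\ell}\binom{k}{\ell}a_k u^{(k-\ell)}$, and observe that $(p_m^*)^{(\ell)}/\ell!$ is precisely the Bargmann adjoint (multiplication) operator of $q_\ell$, so each $\ell$-block contributes $\ell!\,\|q_\ell u\|^2$ directly, with no need to expand the square on the right-hand side and compare coefficients. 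This buys a proof in which the nonnegativity of the commutator pairing is manifest from the start, and it is in fact the one-variable shadow of the operator-reordering identity \eqref{hamil} that the paper only invokes later, in the Remark of Section 3; the paper's version, by contrast, is more pedestrian but requires nothing beyond the Leibniz rule and the elementary adjoint relation. The only point worth stating explicitly in your write-up is that the adjoint identity $(q_\ell u,v)=(u,q_\ell^* v)$ is being applied to the auxiliary operator $q_\ell$ rather than to the $p_j$ appearing in \eqref{gendef2}; since $u$ is a polynomial all the integrals converge absolutely and the identity is just integration by parts on the Fock space, so this is harmless, but it should be said.
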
 

\begin{proof} On the right hand side of \eqref{comm11} we have the integrand
\begin{equation}\label{comm2}
\sum_{\ell =1}^m \ell\, ! \sum_{k,j=\ell }^m \binom {k}{\ell } \binom {j}{\ell }a_k \overline a_j u^{(k- \ell )}
\overline u^{(j- \ell )} .
\end{equation}
For the left hand side of \eqref{comm11} we first compute
\begin{equation}\label{comm3}
[p_m,p_m^*]u = \sum_{k =0}^m a_k \frac{\partial^k}{\partial z^k} ( \sum_{j=0}^m \overline a_j z^j u) - ( \sum_{j=0}^m \overline a_j z^j )  ( \sum_{j=0}^m a_j  u^{(j)}).
\end{equation}
We use the Leibniz rule to get 
\begin{equation}\label{comm4}
 \frac{\partial^k}{\partial z^k} ( \sum_{j=0}^m \overline a_j z^j u) = \sum_{j=0}^m \overline a_j \frac{\partial^k}{\partial z^k} (z^ju)= \sum_{j=0}^m \overline a_j \sum_{\ell=0}^j \ell \, ! \binom {k}{\ell} \binom{j}{\ell} u^{(k -\ell)} z^{j-\ell}, 
\end{equation}
notice that $\binom {k}{\ell}=0,$ in the case  $k<\ell .$ Hence we obtain
\begin{equation}\label{comm5}
[p_m,p_m^*]u = \sum_{j,k=1}^m a_k \overline a_j \sum_{\ell =1}^j \ell \, !  \binom {k}{\ell} \binom{j}{\ell} u^{(k -\ell)} z^{j-\ell}.
\end{equation}
After integration we obtain 

$\int_{\mathbb C} [p_m,p_m^*]u(z) \, \overline{u(z)}\, e^{-|z|^2}\, d\lambda (z)$
\begin{equation}\label{comm6}
=\sum_{j,k=1}^m a_k \overline a_j \sum_{\ell =1}^j \ell \, !  \binom {k}{\ell} \binom{j}{\ell} \int_{\mathbb C} u^{(k -\ell)} \overline u^{(j-\ell)} \, e^{-|z|^2}\, d\lambda (z).
\end{equation}
Now it is easy to show that integration of \eqref{comm2} coincides with \eqref{comm6} and we are done.
\end{proof}

\begin{lem}\label{basic55} There exists a constant $C>0$ such that 
\begin{equation}\label{basic66}
\|u\| \le C  \|D^*u  \|,
\end{equation}
for each $ u  \in  {\text{dom}}(D^* ).$
\end{lem}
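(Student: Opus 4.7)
The plan is to derive Lemma \ref{basic55} as the one-variable instance of Theorem \ref{basic5}, verifying its commutator hypothesis \eqref{comm1} by means of the explicit formula supplied by Lemma \ref{binom}. Writing $u = v\,dz$, I would first observe that in one complex variable every $(2,0)$-form vanishes, so $Du = 0$ for all $(1,0)$-forms and $\mathrm{dom}(D) = A^2_{(1,0)}(\mathbb{C},e^{-|z|^2})$. Consequently $\mathrm{dom}(D)\cap\mathrm{dom}(D^*) = \mathrm{dom}(D^*)$, and the conclusion \eqref{basic6} of Theorem \ref{basic5} collapses to precisely the desired inequality $\|u\|^2 \le C\|D^*u\|^2$.

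It remains to verify \eqref{comm1}, which in this setting reads $\|v\|^2 \le C\int_{\mathbb{C}} [p_m,p_m^*]v\,\bar v\,e^{-|z|^2}\,d\lambda$ for every polynomial $v$. By Lemma \ref{binom}, the right-hand integral equals
$$\sum_{\ell=1}^{m}\ell!\int_{\mathbb{C}}\Bigl|\sum_{k=\ell}^{m}\binom{k}{\ell}a_k v^{(k-\ell)}\Bigr|^2 e^{-|z|^2}\,d\lambda(z),$$
a sum of nonnegative terms. I would then isolate the top-order summand $\ell = m$: its inner sum reduces to the single contribution $a_m v$, producing exactly $m!\,|a_m|^2\|v\|^2$. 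Assuming $p_m$ has genuine degree $m$ (i.e., $a_m \neq 0$), this one term alone verifies \eqref{comm1} with $C = 1/(m!\,|a_m|^2)$, and all other $\ell < m$ summands only improve the bound.

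An application of Theorem \ref{basic5} then delivers $\|u\|^2 \le C\|D^*u\|^2$ for every $u \in \mathrm{dom}(D^*)$. The degenerate case $m=0$ (constant $p_0 = a_0 \neq 0$), where the commutator vanishes identically, is easily handled separately, since $\|D^*u\|^2 = |a_0|^2\|v\|^2$ can be read off directly. I do not anticipate a serious obstacle; the key observation is simply that among the many positive summands produced by Lemma \ref{binom}, the top-order term $\ell = m$ already dominates $\|v\|^2$ with the constant-coefficient factor $m!\,|a_m|^2$, so no delicate cancellation among the lower-order derivative contributions has to be controlled.
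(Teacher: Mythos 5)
Your proposal is correct and follows essentially the same route as the paper: the paper likewise isolates the $\ell=m$ term $m!\,|a_m|^2\|u\|^2$ from Lemma \ref{binom} (assuming $a_m\neq 0$) and uses the identity $\|D^*u\|^2=([p_m,p_m^*]u,u)+\|p_m(u)\|^2$, which is exactly the $n=1$ case of the computation inside Theorem \ref{basic5} with $Du=0$, before passing to all of ${\text{dom}}(D^*)$ by the density Lemma \ref{dense}. Invoking Theorem \ref{basic5} formally instead of rewriting that identity is only a cosmetic difference.
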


\begin{proof} First let $u $ be a polynomial and note that the last term in \eqref{comm11} equals 
$$m! \int_{\mathbb C} |a_m u(z)|^2 \, e^{-|z|^2}\, d\lambda (z) = m! \, |a_m|^2 \| u \|^2 $$
and all the other terms are non-negative, see Lemma \ref{binom}. Now we get that
$$\|D^*u\|^2= (p_m^*u, p_m^*u) = ( [ p_m, p^*_m ] u, u) + (p_m(u),p_m(u)) \ge \frac{1}{C} \, \|u\|^2,$$
where 
$$C= \frac{1}{m! \, |a_m|^2},$$
if we suppose that $a_m \neq 0,$
and we are done.
Finally apply Lemma \ref{dense} to obtain the desired result.

\end{proof}

\begin{thm}\label{genneumann}
Let $D$ be as in \eqref{gendef1}.  Then $\tilde\Box_{D,1}= D D^*$ has a bounded inverse
$$\tilde N_{D,1} :  A^2_{(1,0)}(\mathbb{C}, e^{-|z|^2}) \longrightarrow  {\text{dom}}(\tilde\Box_{D,1}).$$
If $\alpha \in A^2_{(1,0)}(\mathbb{C}, e^{-|z|^2}),$  then $u_0= D^* \tilde N_{D,1} \alpha$
is the canonical solution of $Du=\alpha,$ this means $Du_0=\alpha$ and $u_0 \in  ({\text{ker}} D)^\perp = {\text{im}}D^*,$ and
$\| D^*\tilde N_{D,1} \alpha \| \le C \|\alpha\|,$ for some constant $C>0$ independent of $\alpha.$
\end{thm}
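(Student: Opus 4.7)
The plan is to derive Theorem \ref{genneumann} from Lemma \ref{basic55} by the standard coercivity argument for non-negative self-adjoint operators, and then to construct the canonical solution explicitly as $u_0 = D^* \tilde N_{D,1}\alpha$. Since $D^*$ is densely defined (by Lemma \ref{dense} its domain contains the polynomial $(1,0)$-forms) and closed (as any Hilbert space adjoint is), von Neumann's theorem makes $\tilde\Box_{D,1} = DD^*$ a non-negative self-adjoint operator on $A^2_{(1,0)}(\mathbb{C}, e^{-|z|^2})$. For $u \in \text{dom}(\tilde\Box_{D,1})$ one has
$$(\tilde\Box_{D,1}u, u) = \|D^*u\|^2 \ge \frac{1}{C^2}\|u\|^2$$
by Lemma \ref{basic55}, so the spectrum of $\tilde\Box_{D,1}$ lies in $[1/C^2, \infty)$ and $\tilde\Box_{D,1}$ admits a bounded everywhere-defined inverse $\tilde N_{D,1}$ with $\|\tilde N_{D,1}\| \le C^2$.

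Given $\alpha \in A^2_{(1,0)}(\mathbb{C}, e^{-|z|^2})$, set $v = \tilde N_{D,1}\alpha$ and $u_0 = D^*v$. Then $Du_0 = DD^*v = \tilde\Box_{D,1}\tilde N_{D,1}\alpha = \alpha$, and for every $w \in \ker D$ we have $(u_0, w) = (D^*v, w) = (v, Dw) = 0$, placing $u_0$ in $(\ker D)^\perp$. Upgrading this to $u_0 \in \text{im}(D^*)$ requires $D^*$ to have closed range, which is immediate from Lemma \ref{basic55}: if $D^*u_n \to f$, then $\|u_n - u_m\| \le C\|D^*u_n - D^*u_m\|$ shows $(u_n)$ is Cauchy, and its limit lies in $\text{dom}(D^*)$ with image $f$ by closedness of $D^*$. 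Hence $\text{im}(D^*) = \overline{\text{im}(D^*)} = (\ker D)^\perp$.

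For the quantitative estimate,
$$\|u_0\|^2 = \|D^*v\|^2 = (DD^*v, v) = (\alpha, v),$$
while coercivity applied to $v$ yields $(1/C^2)\|v\|^2 \le (\alpha, v) \le \|\alpha\|\|v\|$, hence $\|v\| \le C^2\|\alpha\|$ and therefore $\|u_0\| \le C\|\alpha\|$. I do not anticipate a substantive obstacle: all of the nontrivial analysis is encoded in Lemma \ref{basic55}, and what remains is a routine application of the spectral theorem to a positive, bounded-below self-adjoint operator. The only step that deserves explicit attention is the closed-range argument for $D^*$ above, which is needed to identify $(\ker D)^\perp$ with $\text{im}(D^*)$ rather than merely with its closure.
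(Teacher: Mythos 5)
Your argument is correct and takes essentially the same route as the paper: the paper's proof of Theorem \ref{genneumann} consists of invoking Lemma \ref{basic55} and then citing \cite{Has20}, Theorems 5.1 and 5.2, for exactly the coercivity/bounded-inverse argument and the canonical-solution construction that you write out explicitly. The only implicit ingredient (shared with the paper, which simply treats $\tilde\Box_{D,1}=DD^*$ as self-adjoint) is the closedness of $D$, so that $(D^*)^*=D$, which is what von Neumann's theorem and the identity $(\ker D)^\perp=\overline{\mathrm{im}\,D^*}$ require; this holds here because norm convergence in the Segal--Bargmann space implies local uniform convergence.
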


\begin{proof}
Using Lemma \ref{basic55} we obtain that 
$$\tilde\Box_{D,1} :  {\text{dom}}(\tilde\Box_{D,1}) \longrightarrow A^2_{(1,0)}(\mathbb{C}, e^{-|z|^2})$$
is bijective and has the bounded inverse $\tilde N_{D,1},$ see \cite{Has20} Theorem 5.1. The rest follows from 
\cite{Has20} Theorem 5.2.
\end{proof}

\section{Commutators}

Let $A_j$ and $B_j, j=1,\dots, n$ be operators satisfying 
$$[A_j,A_k]=[B_j,B_k]=[A_j,B_k]=0 , j\neq k$$
and 
$$[A_j,B_j]=I, j=1,\dots, n.$$
Let $P$ and $Q$ be polynomials of $n$ variables and write $A=(A_1, \dots, A_n)$ and $B=(B_1, \dots, B_n).$
Then 
\begin{equation}\label{hamil}
Q(A)P(B)= \sum_{|\alpha|\ge 0} \frac{1}{\alpha !} P^{(\alpha )}(B) Q^{(\alpha)}(A),
\end{equation}
where $\alpha =(\alpha_1,\dots, \alpha_n)$ are multiindices and $|\alpha |=\alpha_1 +\dots + \alpha_n$ and
$\alpha ! = \alpha_1 ! \dots \alpha_n !,$ see \cite{Q}, \cite{Tr}.

The assumptions are satisfied, if one takes $A_j = \frac{\partial}{\partial z_j}$ and $B_j=z_j$ the multiplication operator.
 The inspiration for this comes from quantum mechanics, where the annihilation operator $A_j$ can be represented by the differentiation with respect to $z_j$ on $A^2(\mathbb C^n, e^{-|z|^2})$ and its adjoint, the creation operator $B_j,$ by the multiplication by $z_j,$ both operators being unbounded densely defined (see \cite{F1}, \cite{FY}). One can show that $A^2(\mathbb C^n, e^{-|z|^2})$ with this action of the $B_j$ and $A_j$ is an irreducible representation $M$ of the Heisenberg group; by the Stone-von Neumann theorem it is the only one up to unitary equivalence. Physically $M$ can be thought of as the Hilbert space of a harmonic oscillator with $n$ degrees of freedom and Hamiltonian operator
$$H =  \sum_{j=1}^n \frac{1}{2} (A_j B_j+B_jA_j).$$

\begin{rem}
If we apply \eqref{hamil} for the one-dimensional case of Lemma \ref{binom}, we get 
\begin{equation*}
\int_{\mathbb C} [p_m,p_m^*]u(z) \, \overline{u(z)}\, e^{-|z|^2}\, d\lambda (z) =
\sum_{\ell =1}^m \int_{\mathbb C} \left | p_m^{(\ell)}u (z) \right |^2 \, e^{-|z|^2}\,
d\lambda (z),
\end{equation*}
which coincides with \eqref{comm11}.
\end{rem}
In the following we consider $\mathbb C^2$ and choose $p_1$ and $p_2$ to be polynomials of degree $2$ in $2$ variables.

\begin{thm}\label{dim2}
Let $p_1,p_2$ be polynomials of degree $2.$ Suppose that
\begin{equation}\label{cond2} 
p_2^{(e_1)*}p_1^{(e_1)}= \pm \, p_1^{(e_2)*}p_2^{(e_2)}, \ p_1^{(e_1)*}p_2^{(e_1)}= \pm \, p_2^{(e_2)*}p_1^{(e_2)}, 
\end{equation}
where $(e_1)$ and $(e_2)$ denote the derivatives with respect to $z_1$ and $z_2$ respectively. In addition suppose that for all derivatives $(\alpha)$ of order $2$ we have
\begin{equation}\label{cond22}
p_j^{(\alpha)*}p_k^{(\alpha)} = \delta_{j,k}  \, c_{j,\alpha}   \ j,k=1,2,
\end{equation}
where 
$$C_1= \sum_{|\alpha|=2} \frac{1}{\alpha !}\, c_{1,\alpha} >0 \ {\text{and}} \ 
C_2= \sum_{|\alpha|=2} \frac{1}{\alpha !}\, c_{2,\alpha} >0.$$ 
Then 
 \begin{equation}\label{comm12}
 \|u\|^2 \le \frac{1}{\min (C_1,C_2)} \,  \sum_{j,k=1}^2 ( [ p_k, p^*_j ] u_j, u_k),
 \end{equation}
for any $(1,0)$-form $u= \sum_{j=1}^2 u_j dz_j $ with polynomial components.
\end{thm}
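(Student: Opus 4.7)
My plan is to expand each commutator via the Heisenberg identity~\eqref{hamil}. With $A_j=\partial/\partial z_j$ and $B_j=z_j$, the $\alpha=0$ term cancels in $[p_k,p_j^*]=p_k(A)p_j^*(B)-p_j^*(B)p_k(A)$, and because $p_1,p_2$ have total degree two, only $|\alpha|=1$ and $|\alpha|=2$ survive. Using that $p_j^{*(\alpha)}(B)$ is the Hilbert-space adjoint of $p_j^{(\alpha)}(A)$ on $A^2(\mathbb C^2,e^{-|z|^2})$, I would rewrite the full quadratic form as
\[
Q(u):=\sum_{j,k=1}^2([p_k,p_j^*]u_j,u_k)=Q_1(u)+Q_2(u),\qquad Q_r(u):=\sum_{|\alpha|=r}\frac{1}{\alpha!}\sum_{j,k=1}^2(p_k^{(\alpha)}u_j,\,p_j^{(\alpha)}u_k),
\]
and then estimate $Q_1$ and $Q_2$ separately.

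The order-two piece $Q_2$ is easy. When $|\alpha|=2$ each polynomial $p_j^{(\alpha)}$ reduces to a constant $\gamma_{j,\alpha}\in\mathbb C$, so $(p_k^{(\alpha)}u_j,p_j^{(\alpha)}u_k)=\gamma_{k,\alpha}\overline{\gamma_{j,\alpha}}(u_j,u_k)$. Hypothesis \eqref{cond22} kills the $j\ne k$ terms and leaves $c_{j,\alpha}\|u_j\|^2$ on the diagonal, so summing over $|\alpha|=2$ gives $Q_2(u)=C_1\|u_1\|^2+C_2\|u_2\|^2\ge\min(C_1,C_2)\|u\|^2$.

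The heart of the argument is to show $Q_1(u)\ge 0$. The diagonal $j=k$ terms already contribute $\sum_{i,j}\|p_j^{(e_i)}u_j\|^2\ge 0$, so I only have to control the off-diagonal sum
\[
2\,\mathrm{Re}\bigl[(p_2^{(e_1)}u_1,p_1^{(e_1)}u_2)+(p_2^{(e_2)}u_1,p_1^{(e_2)}u_2)\bigr].
\]
Writing each inner product in the form $(p_1^{(e_i)*}p_2^{(e_i)}u_1,u_2)$ and invoking the two identities in \eqref{cond2}, I would swap the $e_1$- and $e_2$-labels, picking up signs $\varepsilon_1,\varepsilon_2\in\{\pm 1\}$ inherited from the choices in \eqref{cond2}. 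Together with the diagonal squares, this lets me complete the square as $\sum_i \|p_1^{(e_i)}u_1 \pm p_2^{(e_i)}u_2\|^2$ for every one of the four sign combinations, so $Q_1(u)\ge 0$ unconditionally.

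I expect the only real obstacle to be the sign bookkeeping across the four cases in \eqref{cond2}: one has to verify that in each of the combinations $(\varepsilon_1,\varepsilon_2)\in\{\pm1\}^2$ the cross term reassembles with the diagonal into a sum of squares (not just that one term dominates by Cauchy--Schwarz). Once this combinatorial check is done, adding $Q_1(u)\ge 0$ to $Q_2(u)\ge \min(C_1,C_2)\|u\|^2$ yields \eqref{comm12} directly.
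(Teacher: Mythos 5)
Your proposal is correct and follows essentially the same route as the paper's proof: expand the commutators via \eqref{hamil}, use \eqref{cond22} to reduce the order-two contribution to $C_1\|u_1\|^2+C_2\|u_2\|^2$, and use \eqref{cond2} to swap derivative labels in the order-one cross terms so that they recombine with the diagonal into $\|p_1^{(e_i)}u_1\pm p_2^{(e_i)}u_2\|^2$. The sign bookkeeping you flag is harmless because the two identities in \eqref{cond2} are adjoints of one another, so their signs must coincide unless both sides vanish (in which case the cross terms disappear), and in either situation $Q_1(u)\ge 0$ as you claim.
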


\begin{proof}
Using \eqref{hamil} we obtain
\begin{equation}\label{hamil2}
 ( [ p_k, p^*_j ] u_j, u_k) = \sum_{|\alpha|\ge 1} \frac{1}{\alpha !}\, ( p_j^{(\alpha) *}p_k^{(\alpha)} u_j, u_k).
\end{equation}
Now we use \eqref{cond2} and get for the first order derivatives 
\begin{align*}
\sum_{j,k=1}^2  [ & ( p_j^{(e_1) *}p_k^{(e_1)} u_j, u_k)  +( p_j^{(e_2) *}p_k^{(e_2)} u_j, u_k) ]\\
& = ( p_1^{(e_1) *}p_1^{(e_1)} u_1, u_1)+ ( p_2^{(e_1) *}p_2^{(e_1)} u_2, u_2)\\
& \pm \, ( p_2^{(e_2) *}p_1^{(e_2)} u_1, u_2) \pm \, ( p_1^{(e_2) *}p_2^{(e_2)} u_2, u_1)  \\
& + \, ( p_1^{(e_2) *}p_1^{(e_2)} u_1, u_1)+ ( p_2^{(e_2) *}p_2^{(e_2)} u_2, u_2)\\
& \pm \, ( p_2^{(e_1) *}p_1^{(e_1)} u_1, u_2) \, \pm \, ( p_1^{(e_1) *}p_2^{(e_1)} u_2, u_1)\\
& = (p_1^{(e_1)} u_1 \pm p_2^{(e_1)} u_2, p_1^{(e_1)} u_1 \pm p_2^{(e_1)} u_2)\\
& +(p_1^{(e_2)} u_1 \pm p_2^{(e_2)} u_2, p_1^{(e_2)} u_1 \pm p_2^{(e_2)} u_2)\\
& = \| p_1^{(e_1)} u_1 \pm p_2^{(e_1)} u_2 \|^2 + \| p_1^{(e_2)} u_1 \pm p_2^{(e_2)} u_2 \|^2 .
\end{align*}
For the second order derivatives we obtain
$$\sum_{j,k=1}^2 \sum_{|\alpha|=2 } \frac{1}{\alpha !}\, ( p_j^{(\alpha) *}p_k^{(\alpha)} u_j, u_k) = C_1\|u_1\|^2 +
C_2\|u_2\|^2.$$
Hence we get 

$\sum_{j,k=1}^2 ( [ p_k, p^*_j ] u_j, u_k)$
$$ = C_1\|u_1\|^2 + C_2\|u_2\|^2 + \| p_1^{(e_1)} u_1 \pm p_2^{(e_1)} u_2 \|^2 + \| p_1^{(e_2)} u_1 \pm p_2^{(e_2)} u_2 \|^2 ,$$
which gives \eqref{comm12}.

\end{proof}

In a similar way one shows
\begin{thm}\label{dim23}
Let $p_1,p_2$ be polynomials of degree $2.$ Suppose that
\begin{equation}\label{cond23} 
p_2^{(e_1)*}p_1^{(e_1)}= \pm \, p_1^{(e_1)*}p_2^{(e_1)}, \ p_1^{(e_2)*}p_2^{(e_2)}= \pm \, p_2^{(e_2)*}p_1^{(e_2)}, 
\end{equation}
where $(e_1)$ and $(e_2)$ denote the derivatives with respect to $z_1$ and $z_2$ respectively. In addition suppose that for all derivatives $(\alpha)$ of order $2$ we have
\begin{equation}\label{cond223}
p_j^{(\alpha)*}p_k^{(\alpha)} = \delta_{j,k}  \, c_{j,\alpha}   \ j,k=1,2,
\end{equation}
where 
$$C_1= \sum_{|\alpha|=2} \frac{1}{\alpha !}\, c_{1,\alpha} >0 \ {\text{and}} \ 
C_2= \sum_{|\alpha|=2} \frac{1}{\alpha !}\, c_{2,\alpha} >0.$$ 
Then 
 \begin{equation*}\label{comm42}
 \|u\|^2 \le \frac{1}{\min (C_1,C_2)} \,  \sum_{j,k=1}^2 ( [ p_k, p^*_j ] u_j, u_k),
 \end{equation*}
for any $(1,0)$-form $u= \sum_{j=1}^2 u_j dz_j $ with polynomial components.
\end{thm}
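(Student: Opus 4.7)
\smallskip

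\noindent\emph{Proof plan.} The plan is to mirror the blueprint of Theorem~\ref{dim2}. I would start from the commutator identity \eqref{hamil} applied termwise to obtain
$$\sum_{j,k=1}^2 ([p_k, p_j^*] u_j, u_k) = \sum_{j,k=1}^2 \sum_{|\alpha| \ge 1} \frac{1}{\alpha!} (p_j^{(\alpha)*} p_k^{(\alpha)} u_j, u_k),$$
and observe that, since $\deg p_1 = \deg p_2 = 2$, only multiindices with $|\alpha|=1$ and $|\alpha|=2$ contribute. The $|\alpha|=2$ piece will be handled exactly as in Theorem~\ref{dim2}: hypothesis \eqref{cond223} kills the off-diagonal terms and collapses the diagonal into $C_1\|u_1\|^2 + C_2\|u_2\|^2$.

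The novelty lies in the first-order contribution. For each direction $\ell \in \{1,2\}$, I need to reassemble the four terms $(p_1^{(e_\ell)*}p_1^{(e_\ell)}u_1,u_1)$, $(p_2^{(e_\ell)*}p_2^{(e_\ell)}u_2,u_2)$, $(p_1^{(e_\ell)*}p_2^{(e_\ell)}u_1,u_2)$, $(p_2^{(e_\ell)*}p_1^{(e_\ell)}u_2,u_1)$ into a perfect square. Whereas \eqref{cond2} accomplished this by swapping $e_1$-cross-terms with $e_2$-cross-terms, the new hypothesis \eqref{cond23} relates the two cross-index operators \emph{within} the same direction, namely $p_2^{(e_\ell)*}p_1^{(e_\ell)} = \pm\, p_1^{(e_\ell)*}p_2^{(e_\ell)}$ (with $\ell=1$ giving the first identity and $\ell=2$ the second). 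Replacing one of the cross-pairings by the other via this identity, and invoking the adjoint relation between $p_j^{(e_\ell)}$ and $p_j^{(e_\ell)*}$, the four terms collapse into $\|p_1^{(e_\ell)}u_1 \pm p_2^{(e_\ell)}u_2\|^2$.

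Combining the two directions with the second-order contribution will give
$$\sum_{j,k=1}^2 ([p_k, p_j^*] u_j, u_k) = C_1\|u_1\|^2 + C_2\|u_2\|^2 + \bigl\|p_1^{(e_1)}u_1 \pm p_2^{(e_1)}u_2\bigr\|^2 + \bigl\|p_1^{(e_2)}u_1 \pm p_2^{(e_2)}u_2\bigr\|^2,$$
and discarding the nonnegative square norms on the right yields the claimed estimate with constant $1/\min(C_1,C_2)$, applied to polynomial components and then extended by Lemma~\ref{dense}.

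The main obstacle I anticipate is purely algebraic bookkeeping: tracking the $\pm$ signs chosen in each line of \eqref{cond23} through the regrouping so that the cross-terms carry precisely the sign needed to match the target square norm. Because \eqref{cond23} keeps both cross-operators in the same derivative direction, no mixing between $e_1$- and $e_2$-terms is required, and the calculation should in fact run a bit more cleanly than in Theorem~\ref{dim2}.
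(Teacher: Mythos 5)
Your proposal is correct and is exactly the argument the paper intends: the paper proves Theorem~\ref{dim23} only by the remark ``in a similar way one shows,'' deferring to the proof of Theorem~\ref{dim2}, and your adaptation is the right one — since \eqref{cond23} identifies $p_1^{(e_\ell)*}p_2^{(e_\ell)}$ with $\pm\,p_2^{(e_\ell)*}p_1^{(e_\ell)}$ within each fixed direction, the mismatched cross-terms $(p_j^{(e_\ell)*}p_k^{(e_\ell)}u_j,u_k)$, $j\neq k$, become $\pm(p_1^{(e_\ell)}u_1,p_2^{(e_\ell)}u_2)$ and its conjugate, yielding $\|p_1^{(e_\ell)}u_1\pm p_2^{(e_\ell)}u_2\|^2$ per direction, while the second-order terms give $C_1\|u_1\|^2+C_2\|u_2\|^2$ as in Theorem~\ref{dim2}. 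The only superfluous point is the final appeal to Lemma~\ref{dense}: the statement is asserted only for forms with polynomial components, so no density extension is needed (that step belongs to Theorem~\ref{basic5}).
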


Finally we exhibit some examples, where conditions \eqref{cond2} and \eqref{cond22}, or 
\eqref{cond23} and \eqref{cond223} are checked. Examples (a) and (c) are taken from \cite{Has20}, where $ \sum_{j,k=1}^n ( [ p_k, p^*_j ] u_j, u_k)$ was directly computed.

\begin{ex}\label{nice}
a)  We take $p_1= \frac{\partial^2}{\partial z_1 \partial z_2}$ and $p_2=  \frac{\partial^2}{\partial z_1^2} +  \frac{\partial^2}{\partial z_2^2}.$ Then $p_1^*(z)= z_1z_2$ and $p_2^*(z)= z_1^2+z_2^2$ and we see that 
\eqref{cond2} and \eqref{cond22} are satisfied:
$$p_2^{(e_1)*}p_1^{(e_1)}=  p_1^{(e_2)*}p_2^{(e_2)}, \ p_1^{(e_1)*}p_2^{(e_1)}=  p_2^{(e_2)*}p_1^{(e_2)},$$
and  we obtain
  
$ \sum_{j,k=1}^2 ( [ p_k, p^*_j ] u_j, u_k)$
$$ = \int_{\mathbb C^2}  (|u_1|^2+ 4|u_2|^2
+ \left |\frac{\partial u_1}{\partial z_1}+ 2 \frac{\partial u_2}{\partial z_2}\right |^2 + \left |\frac{\partial u_1}{\partial z_2}
+ 2\frac{\partial u_2}{\partial z_1}\right |^2 )\, e^{-|z|^2}\, d\lambda,
$$
for $u= \sum_{j=1}^2 u_j dz_j $ with polynomial components.

b) Taking $p_1= i\frac{\partial^2}{\partial z_1 \partial z_2}$ and $p_2=  \frac{\partial^2}{\partial z_1^2} +  \frac{\partial^2}{\partial z_2^2}$ we have that $p_1^*(z)= -iz_1z_2$ and $p_2^*(z)= z_1^2+z_2^2$ and that 
\eqref{cond2} and \eqref{cond22} are satisfied:
$$p_2^{(e_1)*}p_1^{(e_1)}=  -p_1^{(e_2)*}p_2^{(e_2)}, \ p_1^{(e_1)*}p_2^{(e_1)}=  -p_2^{(e_2)*}p_1^{(e_2)},$$
and  we obtain
  
$ \sum_{j,k=1}^2 ( [ p_k, p^*_j ] u_j, u_k)$
$$ = \int_{\mathbb C^2}  (|u_1|^2+ 4|u_2|^2
+ \left |\frac{\partial u_1}{\partial z_1}+ 2i \frac{\partial u_2}{\partial z_2}\right |^2 + \left |\frac{\partial u_1}{\partial z_2}
+ 2i\frac{\partial u_2}{\partial z_1}\right |^2 )\, e^{-|z|^2}\, d\lambda,
$$
for $u= \sum_{j=1}^2 u_j dz_j $ with polynomial components.

c)  Let $ p_k = \frac{\partial^2}{\partial z_k^2}, k =1,2.$ Then $p^*_j(z)=z_j^2, j=1,2$ and we see that
\eqref{cond23} and \eqref{cond223} are satisfied and we have
\begin{eqnarray*}
 \sum_{j,k=1}^2 ( [ p_k, p^*_j ] u_j, u_k) &=& \sum_{j,k=1}^2 (2\delta_{j,k} u_j,u_k) + \sum_{j,k=1}^2 (4\delta_{jk} z_j \frac{\partial u_j}{\partial z_k},u_k)\\
 &=& 2\|u\|^2 + 4 \sum_{j=1}^2 \left \| \frac{\partial u_j}{\partial z_j} \right \|^2.
 \end{eqnarray*}

d) For $p_1= \frac{\partial^2}{\partial z_1^2}+ \frac{\partial}{\partial z_2} $ and $p_2=  \frac{\partial}{\partial z_1} +  \frac{\partial^2}{\partial z_2^2}$ we have $p_1^*(z)= z_1^2+z_2$ and $p_2^*(z)= z_1+z_2^2$ and we see that 
\eqref{cond2} and \eqref{cond23} are not satisfied. In particular,

\begin{eqnarray*} \sum_{j,k=1}^2 ( [ p_k, p^*_j ] u_j, u_k)
 &=& 3( \|u_1\|^2+\|u_2\|^2) + 4 (\| \frac{\partial u_1}{\partial z_1}\|^2 + \| \frac{\partial u_2}{\partial z_2}\|^2)\\
 &+& 2(u_1,z_2u_2)+ 2(z_1u_1,u_2)+ 2(u_2,z_1u_1) + 2(z_2u_2,u_1)
\end{eqnarray*}
for $u= \sum_{j=1}^2 u_j dz_j $ with polynomial components.

If we take $u_1(z_1,z_2)=z_2^n$ and $u_2(z_1,z_2)=-z_2^{n-1},$ where $n>7,$ we get
\begin{eqnarray*}
 \sum_{j,k=1}^2 ( [ p_k, p^*_j ] u_j, u_k) &=& \pi^2 [3 n! +3 (n-1)! +4(n-1)(n-2)! - 4 n!]\\
 &=&
(n-1)! \, \pi^2  \, (7-n) <0.
\end{eqnarray*}
\end{ex}

{\bf Acknowledgment.} The author thanks the referees for several useful suggestions.

\end{document}